\newtheorem{theorem}{Theorem}[section]
\newtheorem{lemma}[theorem]{Lemma}
\theoremstyle{definition}
\newtheorem{definition}[theorem]{Definition}
\theoremstyle{remark}
\numberwithin{equation}{section}
\begin{document}

\title{A modification of the Chang-Wilson-Wolff Inequality via the Bellman Function}

\author{Henry Riely}

\address{Department of Mathematics and Statistics, Washington State University, Pullman, Washington 99163}

\email{hriely@math.wsu.edu}

\subjclass[2010]{Primary 60G42, 42A61.}

\date{March 16, 2018.}

\keywords{Bellman function, dyadic martingale, Chang-Wilson-Wolff inequality}

\begin{abstract}
We describe the Bellman function technique for proving sharp inequalities in harmonic analysis. To provide an example along with historical context, we present how it was originally used by Donald Burkholder to prove $L^p$ boundedness of the $\pm 1$ martingale transform. Finally, with Burkholder's result as a blueprint, we use the Bellman function to prove a new result related to the Chang-Wilson-Wolff Inequality.
\end{abstract}

\maketitle

\section{Introduction}

The Bellman function technique, named for applied mathematician Richard Bellman, is a tool that has been imported from the applied field of stochastic optimal control, and is now being used to tackle problems in probability and harmonic analysis. It was introduced to the world of analysis by Donald Burkholder, who in \cite{Burkholder} used it to prove that the $\pm$1 transform of a martingale is a bounded operator on $L^p$. We will borrow Burkholder's ideas to prove a new result concerning the exponential integrability of dyadic martingales.

Section 2 is expository. We briefly summarize Burkholder's use of the Bellman function to prove a sharp martingale inequality. This will serve as homage to Burkholder, the pioneer, and provide some historical context. As importantly, it will give a template upon which we will build the proof of our main result. The Bellman function technique is much easier to demonstrate than it is to describe abstractly.

In section 3, we introduce a well known inequality from harmonic analysis due to Chang, Wilson, and Wolff which classifies the order of local integrability of a function whose dyadic square function is bounded. Their result says that given a function $f:[0,1) \to \mathbb{R}$ and its dyadic square function $Sf$,
\begin{equation*}
\int_0^1 e^{f(x)-\langle f \rangle_{[0,1)}}dx \leq e^{\frac{1}{2}\|(Sf)^2(x)\|_{L^\infty}}
\end{equation*}
Next, we address a related question from \cite{SV}. Namely, we explore whether there exists a constant $\alpha$ such that
\begin{equation*}
\int_0^1 e^{f(x)-\langle f \rangle_{[0,1)}}dx \leq \int_0^1 e^{\alpha(Sf)^2(x)}dx
\end{equation*}
and if so, what the smallest valid choice of $\alpha$ is. Stated probabilistically, given a dyadic martingale with $f_0 = 0$, what is the smallest $\alpha$ such that
\begin{equation*}
\mathbb{E}e^{f_n} \leq \mathbb{E}e^{\alpha(Sf_n)^2}
\end{equation*}
We use the Bellman function to prove, without constructing an explicit example, that $\alpha = 2$ makes this inequality sharp.

Lastly, in section 4, we provide an alternate proof of our inequality by applying Cauchy-Schwarz to a result known as Rubin's lemma. We attempt to construct an example of a martingale which shows $\alpha = 2$ is sharp, but come up short. Our example pushes $\alpha$ up to $\log_2(e) \approx 1.44$, but finding the extremal martingale is left for future work.

\section{Bellman Function Technique}

We begin by illustrating the utility of the Bellman function. We shall summarize Burkholders arguments, which we will repurpose for our own problem later on. The exposition roughly follows that of \cite{Osekowski}.

\begin{definition}
Let $(\Omega, \mathcal{F}, \mathbb{P})$ be a probability space filtered by $\{\mathcal{F}_n\}$. Then the sequence of random variables $\{f_n\}$ is called a martingale if for each $n \in \mathbb{N}$
\begin{enumerate}
\item $f_n$ is $\mathcal{F}_n$ measurable\\
(i.e., $\{f_n\}$ is adpapted to the filtration $\{\mathcal{F}_n\}$)
\item $\|f_n\|_{L^1} < \infty$
\item $E[f_{n+1}|\mathcal{F}_n] = f_n$
\end{enumerate}
\end{definition}

If we replace the equality in condition 3 with $\leq$ or $\geq$, $\{f_n\}$ is called a supermartingale or submartingale respectively.

\begin{definition}
Let $\{f_n\}$ be a martingale. Define $df_0 = f_0$ and $df_n = f_n - f_{n-1}$ for $n>0$. $\{df_n\}$ is called the difference sequence of ${f_n}$.
\end{definition}

Note that $\{df_n\}$ is adapted to $\mathcal{F}_n$ and $E[df_{n+1}|\mathcal{F}_n] = 0$. It is often useful to express a martingale as the sum of its difference sequence, $f_n = \sum_{k=0}^n df_k$.

\begin{definition}
Let $\{f_n\}$ be a martingale, and define
$$g_n = \sum_{k=0}^n \epsilon_k df_k$$
where $\{\epsilon_k\}$ is a deterministic sequence, all of whose terms are $\pm1$. $\{g_n\}$ is called a $\pm 1$ transform of $\{f_n\}$.
\end{definition}

In \cite{Burkholder}, Burkholder used the Bellman function technique to prove the following result.

\begin{theorem}
There is a constant $\beta_p$ such that if $\{g_n\}$ is a $\pm 1$ transform of $\{f_n\}$, then for all $n \in \mathbb{N}$ and $0<p<\infty$
\begin{equation}
\|g_n\|_p \leq \beta_p \|f_n\|_p
\end{equation}
\end{theorem}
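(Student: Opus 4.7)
The plan is to follow the Bellman function strategy outlined above: reduce the claimed inequality to the existence of an auxiliary function that dominates $|y|^p - \beta_p^p|x|^p$ and whose composition with $(f_n, g_n)$ is a supermartingale.

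First I would raise both sides of (1) to the $p$th power and pass to expectations, reducing the claim to
$$\mathbb{E}\bigl[|g_n|^p - \beta_p^p |f_n|^p\bigr] \leq 0.$$
I would then search for a Bellman function $B: \mathbb{R}^2 \to \mathbb{R}$ satisfying (a) the \emph{majorization} $B(x, y) \geq |y|^p - \beta_p^p |x|^p$; (b) the \emph{initial condition} $B(x, \pm x) \leq 0$, motivated by $g_0 = \epsilon_0 f_0$ so that $|g_0| = |f_0|$; and (c) the \emph{main inequality}
$$B(x + h, y + \epsilon h) + B(x - h, y - \epsilon h) \leq 2 B(x, y)$$
for all $h \in \mathbb{R}$ and $\epsilon \in \{\pm 1\}$. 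Given such a $B$, the rest of the argument is short: conditioning on $\mathcal{F}_{n-1}$ and applying (c) at $(x, y) = (f_{n-1}, g_{n-1})$ with increment $h = df_n$, together with $\mathbb{E}[df_n \mid \mathcal{F}_{n-1}] = 0$ and Jensen's inequality, shows that $\{B(f_n, g_n)\}$ is a supermartingale. Iterating and invoking (a) and (b) then yields $\mathbb{E}[|g_n|^p - \beta_p^p |f_n|^p] \leq \mathbb{E}B(f_n, g_n) \leq \mathbb{E}B(f_0, g_0) \leq 0$.

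The essential task is to exhibit $B$, which is where Burkholder's real insight lies. Following \cite{Burkholder}, I would set $\beta_p = p^* - 1$ with $p^* = \max(p, p/(p-1))$ and take
$$B(x, y) = \bigl(|y| - (p^* - 1)|x|\bigr)\bigl(|x| + |y|\bigr)^{p-1}.$$
Condition (b) is immediate because $B(x, \pm x) = (2 - p^*)\, 2^{p-1} |x|^p \leq 0$, using $p^* \geq 2$ for all $p \in (1, \infty)$. Condition (a) is checked by splitting into the regions $|y| \gtrless (p^* - 1)|x|$ and making a direct pointwise comparison. The main obstacle is (c): by the symmetries and homogeneity of $B$ this reduces to a one-dimensional concavity statement in the parameter $h$, and verifying the correct sign of the resulting second derivative is a careful but routine calculation depending on the defining relation between $p$ and $p^*$. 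I expect this verification, rather than the Bellman chain itself, to constitute the bulk of the proof.
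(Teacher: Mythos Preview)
Your plan is the same Bellman-function framework the paper uses: rewrite the inequality as $\mathbb{E}V(f_n,g_n)\le 0$, find $B\ge V$ that is diagonally concave and satisfies $B(x,\pm x)\le 0$, and run the supermartingale chain down to $n=0$. The one substantive difference is that you go further than the paper does---the paper sets up the framework (its Theorems 2.5 and 2.7) and then explicitly declines to construct $B$, whereas you supply Burkholder's explicit candidate $B(x,y)=(|y|-(p^\ast-1)|x|)(|x|+|y|)^{p-1}$ and sketch why it should work. One small caution: your condition (c) is \emph{midpoint} concavity along the diagonals, while for a general (not dyadic) martingale the step ``$\mathbb{E}[df_n\mid\mathcal F_{n-1}]=0$ and Jensen'' needs \emph{full} diagonal concavity, which is how the paper states it; since your explicit $B$ is continuous, midpoint concavity does upgrade to full concavity, but you should say so (or state (c) in the stronger form) rather than leave that bridge implicit.
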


In other words, each $\pm 1$ transform is a bounded operator on $L^p$ for $0<p<\infty$. Theorem 1 has an interesting corollary, which highlights the connection between harmonic analysis and probability. Theorem 1 implies that the Haar basis for $L^p([0,1))$ is unconditional since the partial sums of a Haar series are a martingale on $([0,1),\mathcal{B},|\cdot|)$ (see \cite{Burkholder} or \cite{Osekowski} for details). Thinking about functions as random variables is sometimes a useful perspective in analysis as it allows us to import tools from probability where it's not obvious that they belong.

We now outline Burkholder's proof of Theorem 1 in order to demonstrate the Bellman function technique and provide a blueprint. Its essence is to relate the validity of an inequality to the existence of a special function. Often the existence of the function is easier to prove or disprove than the given inequality. We will henceforth assume that $\{f_n\}$ is a simple martingale (each $f_n$ takes on finitely many values and eventually $f_n = f_{n+1} = \dots$). Passage to the general case follows from an approximation argument.

\begin{theorem}
Suppose there exists a function $B:\mathbb{R}^2 \to \mathbb{R}$ with the following three properties.
\begin{enumerate}
\item (Majorization) $B(x,y) \geq x^p - \beta_p^p y^p \coloneqq V(x,y)$
\item (Concavity) For all $x,y,t_1,t_2 \in \mathbb{R}, \epsilon = \pm 1$, and $\alpha \in (0,1)$ such that $\alpha t_1 + (1 - \alpha) t_2 = 0$, we have
$$\alpha B(x + t_1,y + \epsilon t_1) + (1-\alpha)B(x + t_2, y + \epsilon t_2) \leq B(x,y)$$
\item (Initial condition) $B(x,\pm x) \leq 0$
\end{enumerate}
Then (2.1) holds.
\end{theorem}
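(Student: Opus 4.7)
The plan is to run the classical Bellman induction on the pair $(g_n, f_n)$: use the concavity hypothesis to show that $\mathbb{E}[B(g_n, f_n)]$ is non-increasing in $n$, use the initial condition to pin its value at $n = 0$ to at most $0$, and then use the majorization hypothesis to convert this bound into (2.1).

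First I would identify $(x, y)$ with $(g_n, f_n)$ and interpret the concavity hypothesis as a conditional expectation inequality along one martingale step. On an atom of $\mathcal{F}_n$ on which $df_{n+1}$ is two-valued, setting $t_i = \epsilon_{n+1}\, df_{n+1}$ on each branch gives $g_{n+1} = g_n + t_i$ and $f_{n+1} = f_n + \epsilon_{n+1} t_i$ (using $\epsilon_{n+1}^2 = 1$), and the martingale condition $\mathbb{E}[df_{n+1} \mid \mathcal{F}_n] = 0$ translates exactly into $\alpha t_1 + (1 - \alpha) t_2 = 0$. The hypothesis then reads
\begin{equation*}
\mathbb{E}\bigl[B(g_{n+1}, f_{n+1}) \,\big|\, \mathcal{F}_n\bigr] \leq B(g_n, f_n).
\end{equation*}
If on some atom $df_{n+1}$ takes more than two values, a short upgrade is needed: the two-point hypothesis is exactly the statement that $B$ is concave along every line in $\mathbb{R}^2$ of direction $(1, \epsilon)$, and Jensen's inequality applied to this one-dimensional concave restriction yields the same conditional bound for any finite mean-zero distribution of increments.

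Iterating gives $\mathbb{E}[B(g_n, f_n)] \leq \mathbb{E}[B(g_0, f_0)]$. Since $g_0 = \epsilon_0 f_0 = \pm f_0$, the initial condition $B(x, \pm x) \leq 0$ forces $\mathbb{E}[B(g_0, f_0)] \leq 0$. Finally, the majorization hypothesis evaluated at $(g_n, f_n)$ and integrated gives
\begin{equation*}
\mathbb{E}[g_n^p] - \beta_p^p \mathbb{E}[f_n^p] \leq \mathbb{E}[B(g_n, f_n)] \leq 0,
\end{equation*}
which is (2.1) after taking $p$-th roots. The one step that requires care is the upgrade from the two-point concavity to the many-point version needed when $df_{n+1}$ has more than two outcomes on an atom; however, this is essentially free once one identifies the hypothesis as concavity of $B$ along lines of direction $(1, \pm 1)$, and the remainder of the proof is a clean telescoping driven by the martingale property.
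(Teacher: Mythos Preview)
Your proposal is correct and follows essentially the same Bellman induction as the paper: establish that $B$ applied to the martingale pair is a supermartingale via the diagonal concavity (with the Jensen upgrade for more than two values, which the paper also invokes), iterate down to time $0$, use the initial condition, and then use majorization. The only cosmetic difference is that you plug in $(g_n,f_n)$ rather than the paper's $(f_n,g_n)$; your ordering actually matches the stated direction of (2.1) more directly, while the paper's choice yields the equivalent reverse inequality (which is fine by the symmetry of the $\pm 1$ transform relation).
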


As promised, we've reduced the veracity of (2.1) to the existence of a certain function with special properties. Note that, due to the majorization property, the existence of $B$ depends on our choice of $\beta$ and $p$. This is to be expected because (1) may hold for some $\beta$ and $p$ but not others.

$B$ is actually not the Bellman function, but a pointwise majorant of it. The definition of Bellman function and its relationship to our $B$ will be discussed momentarily.

Property 1 states that $B$ dominates a function $V$ whose definition is suggested by the inequality we're after. Note that (2.1) is equivalent to $\mathbb{E}V(f_n,g_n) \leq 0$. Property 2 says that $B$ is "diagonally concave," i.e., it is concave along the lines of slope $\pm 1$. This implies $\mathbb{E}B(x + \xi, y \pm \xi) \leq B(x,y)$ for all mean zero random variables $\xi$ by Jensen's inequality. The form of this concavity condition also varies with the inequality to be proven. It is chosen so that $\{B(f_n,g_n)\}$ is a supermartingale.

\begin{lemma}
If $B$ satisfies condition 2 in Theorem 2.5, then $\{B(f_n,g_n)\}$ is a supermartingale.
\end{lemma}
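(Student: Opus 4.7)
The plan is to verify the conditional supermartingale inequality $\mathbb{E}[B(f_{n+1}, g_{n+1}) \mid \mathcal{F}_n] \leq B(f_n, g_n)$ directly; adaptedness and integrability of $\{B(f_n,g_n)\}$ follow immediately from the standing assumption that the martingale is simple. I would freeze an atom of $\mathcal{F}_n$ on which $(f_n, g_n) = (x,y)$, write $f_{n+1} = x + df_{n+1}$ and $g_{n+1} = y + \epsilon_{n+1} df_{n+1}$, and reduce the task to comparing $\mathbb{E}[\phi(df_{n+1}) \mid \mathcal{F}_n]$ with $\phi(0)$, where $\phi(t) := B(x+t,\, y + \epsilon_{n+1} t)$.

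The main step is to show that $\phi$ is concave on $\mathbb{R}$. Condition 2 of Theorem 2.5, read at a single fixed base point $(x,y)$, only asserts the two-point Jensen inequality centered at $t = 0$. The useful observation is that condition 2 is quantified over all $(x,y) \in \mathbb{R}^2$, so I can recenter: given $m, s_1, s_2 \in \mathbb{R}$ and $\lambda \in (0,1)$ with $\lambda s_1 + (1-\lambda) s_2 = m$, I apply condition 2 at the translated base point $(x + m,\, y + \epsilon_{n+1} m)$ with $t_i := s_i - m$ and $\alpha := \lambda$. Then $\alpha t_1 + (1-\alpha) t_2 = 0$ is automatic and the conclusion becomes $\lambda \phi(s_1) + (1-\lambda)\phi(s_2) \leq \phi(m)$, which is exactly concavity of $\phi$ at $m$.

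With $\phi$ concave, the conditional Jensen inequality combined with $\mathbb{E}[df_{n+1} \mid \mathcal{F}_n] = 0$ yields $\mathbb{E}[\phi(df_{n+1}) \mid \mathcal{F}_n] \leq \phi(0) = B(x,y)$ on the chosen atom, and patching over the finitely many atoms of $\mathcal{F}_n$ finishes the argument. I expect the translation step — upgrading the two-point condition at a single base point into genuine concavity along the entire line of slope $\epsilon_{n+1}$ — to be the conceptually delicate piece, since without it condition 2 alone would only dispatch the case when $df_{n+1}$ takes at most two values on each atom.
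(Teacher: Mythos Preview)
Your proof is correct and follows the same line as the paper's: both rewrite $(f_{n+1},g_{n+1}) = (f_n + df_{n+1},\, g_n + \epsilon_{n+1}\,df_{n+1})$ and then use diagonal concavity together with the conditional mean-zero property of $df_{n+1}$. The paper simply asserts, in the paragraph preceding the lemma, that condition~2 amounts to concavity along lines of slope $\pm 1$ and hence gives $\mathbb{E}B(x+\xi,\,y\pm\xi)\le B(x,y)$ by Jensen; your recentering argument is precisely the verification of that assertion, so you have supplied a detail the paper leaves implicit rather than taken a different route.
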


\begin{proof}
\begin{align}
\mathbb{E}[B(f_{n+1},g_{n+1})|\mathcal{F}_n] &= \mathbb{E}[B(f_n + df_{n+1},g_n \pm df_{n+1})|\mathcal{F}_n]\\
&\leq \mathbb{E}[B(f_{n},g_{n})|\mathcal{F}_n]\\
&= B(f_n,g_n)
\end{align}
where (2.2) uses that $g_n$ is a $\pm 1$ transform of $f_n$ and (2.3) is from applying condition 2 in Theorem 2.5 conditionally. (2.4) is because $(f_n,g_n)$ is $\mathcal{F}_n$ measurable.
\end{proof}

We now prove Theorem 2.5.

\begin{proof}
Recall that it suffices to show $\mathbb{E}V(f_n,g_n) \leq 0$.
\begin{align}
\mathbb{E}V(f_n,g_n) &\leq \mathbb{E}B(f_n,g_n)\\
&=\mathbb{E}\big[\mathbb{E}[B(f_n,g_n)|\mathcal{F}_{n-1}]\big]\\
&\leq \mathbb{E}B(f_{n-1},g_{n-1})
\end{align}
where (2.5) follows from majorization (2.6) is because conditional expectation preserves expectation. (2.7) is from the fact that $\{B(f_n,g_n)\}$ is a supermartingale.
Repeating the argument n times, we get
$$\mathbb{E}V(f_n,g_n) \leq \mathbb{E}B(f_0,g_0) = \mathbb{E}B(df_0,\pm df_0) \leq 0$$
The final inequality is from the initial condition.
\end{proof}

With Theorem 2.5 proved, we can now prove Theorem 2.4 by producing an appropriate $B$. However, we can actually do more. In fact, given a $0<p<\infty$ and $\beta_p$, if no such $B$ exists, then (2.1) is false.

\begin{theorem}
Given $0<p<\infty$ and $\beta_p$, (2.1) holds if and only if there exists a function $B$ with the three properties from Theorem 2.
\end{theorem}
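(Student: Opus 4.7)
The ``if'' direction is Theorem 2.5, already established. For the ``only if'' direction, assume (2.1) holds and build the required $B$. The natural candidate---and the origin of the name ``Bellman function''---is the value function of the stochastic optimization problem implicit in (2.1): define
\[
\mathbf{B}(x,y) = \sup \mathbb{E}\,V\Bigl(x + \sum_{k=1}^n d_k,\; y + \sum_{k=1}^n \epsilon_k d_k\Bigr),
\]
where the supremum ranges over $n \geq 0$, all finite simple martingale difference sequences $(d_k)_{k=1}^n$ (on any filtered probability space), and all sign patterns $(\epsilon_k)_{k=1}^n \in \{\pm 1\}^n$. The plan is to verify that this $\mathbf{B}$ satisfies the three required properties.

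Majorization is immediate: taking $n = 0$ in the supremum yields $\mathbf{B}(x,y) \geq V(x,y)$. For the initial condition, when $y = \pm x$ any admissible configuration in the supremum can be realized as a genuine $\pm 1$ transform of a simple martingale by setting $df_0 = x$ and $df_k = d_k$ for $k \geq 1$, and prepending the sign $\epsilon_0 = \pm 1$ chosen so that $g_0 = \epsilon_0 f_0 = y$. The assumed inequality (2.1) then gives $\mathbb{E}V(f_n,g_n) \leq 0$; taking the supremum yields $\mathbf{B}(x,\pm x) \leq 0$.

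The concavity condition is the substantive step. Fix $x, y, t_1, t_2, \epsilon, \alpha$ with $\alpha t_1 + (1-\alpha)t_2 = 0$, and fix $\delta > 0$. Choose a configuration $\mathcal{C}_i$ starting at $(x + t_i, y + \epsilon t_i)$ whose expected payoff exceeds $\mathbf{B}(x+t_i,y+\epsilon t_i) - \delta$ for $i = 1,2$. Splice these together by prepending a single step: let $d_1 = t_1$ with probability $\alpha$ and $d_1 = t_2$ with probability $1-\alpha$ (mean zero by hypothesis), take $\epsilon_1 = \epsilon$, and on each branch continue with the corresponding $\mathcal{C}_i$. This spliced process is a legitimate competitor in the supremum defining $\mathbf{B}(x,y)$, so $\mathbf{B}(x,y)$ is at least $\alpha\mathbf{B}(x+t_1,y+\epsilon t_1) + (1-\alpha)\mathbf{B}(x+t_2,y+\epsilon t_2) - \delta$. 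Letting $\delta \to 0$ gives the desired diagonal concavity.

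The main obstacle I anticipate is the gluing step: one must verify that the spliced process is a simple martingale difference sequence on a common filtered probability space, handling measurability carefully when the continuations $\mathcal{C}_i$ a priori live on different spaces. The standard workaround is to pass to an enlarged product space and condition on the branch taken at time one; this is routine but bookkeeping-heavy. A secondary subtlety is that $\mathbf{B}$ may in principle take the value $+\infty$ off the diagonals $y = \pm x$, but the supermartingale chain in the proof of Theorem 2.5 only requires the concavity inequality to hold in the extended-real sense together with the boundary bound $\mathbf{B}(f_0, g_0) \leq 0$, both of which have already been established, so no further finiteness argument is needed.
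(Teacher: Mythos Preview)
Your approach is essentially identical to the paper's: define the Bellman function as the supremum of $\mathbb{E}V$ over admissible pairs started at $(x,y)$, then check majorization, the initial condition, and concavity by splicing. The paper carries out the splice concretely on $[0,1)$ rather than on an abstract product space, but that is cosmetic.

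The substantive discrepancy is your final paragraph. You assert that finiteness of $\mathbf{B}$ is unnecessary because the supermartingale chain in Theorem~2.5 still runs for extended-real $B$. Whatever the merits of that observation, it does not prove the theorem as written: Theorem~2.5 posits a function $B:\mathbb{R}^2\to\mathbb{R}$, and Theorem~2.7 asks you to produce such a $B$. An extended-real-valued candidate does not meet the specification. More to the point, the paper treats finiteness as the main technical content of this direction and devotes most of the proof to it: one constructs, for each $(x,y)$ with $|x|\neq|y|$, an auxiliary pair $(f',g')$ starting on the diagonal $\{y=x\}$ that passes through $(x,y)$ after one step, then uses the initial condition $\mathcal{B}(\cdot,\pm\cdot)\le 0$ to extract the bound $\mathcal{B}(x,y)\le -V(y,x)<\infty$. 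That construction is the missing idea in your write-up. If you want to avoid it, you would have to argue directly that concavity plus the diagonal bound forces $\mathbf{B}<\infty$ everywhere (which is close in spirit to the paper's trick anyway), or else restate and reprove Theorem~2.5 for $B:\mathbb{R}^2\to(-\infty,+\infty]$; neither is done here.
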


\begin{proof}
We know that if $B$ exists, then (2.1) holds by Theorem 2.5. Now we must show that if (2.1) holds, then $B$ exists.

Let $\mathcal{M}(x,y)$ be the set of all $\mathbb{R}^2$ valued martingales $(f_n,g_n)$ such that $(f_0,g_0) \equiv (x,y)$ and $dg_n = \pm df_n$ for $n \geq 1$. We define the Bellman function

$$\mathcal{B}(x,y) \coloneqq \sup\{\mathbb{E}V(f_n,g_n) : (f_n,g_n)\in M(x,y)\}$$

We will show that $\mathcal{B}$ is the desired function $B$ possessing the three properties. Majorization is straightforward. Observe that the deterministic pair $(x,y) \in \mathcal{M}(x,y)$. The initial condition $\mathcal{B}(x,\pm x) \leq 0$ follows from (2.1). To show concavity, we use a "splicing argument." Take $x,y,t_1,t_2,\alpha,\epsilon$ as in the statement of the concavity condition. Choose any $(f_n^a,g_n^a) \in \mathcal{M}(x + t_1, y + \epsilon t_1)$ and $(f_n^b,g_n^b) \in \mathcal{M}(x + t_2, y + \epsilon t_2)$. We may assume the pairs are given on $([0,1),\mathcal{B},|\cdot|)$. We will define another martingale by "splicing" these two. Let $(f_0,g_0) \equiv (x,y)$ and for $n \geq 0$
\[ (f_{n+1},g_{n+1})(\omega) = 
	\begin{cases} 
		(f_n^a,g_n^a)(\frac{\omega}{\alpha}) & \omega \in [0,\alpha) \\
		(f_n^b,g_n^b)(\frac{\omega - \alpha}{1 - \alpha}) & \omega \in [\alpha,1) \\
	\end{cases}
\]
One can check that $\{(f_n,g_n)\} \in \mathcal{M}(x,y)$, and
\begin{align*}
\mathcal{B}(x,y) \geq \mathbb{E}V(f_n,g_n) = \alpha\mathbb{E}V(f_n^a,g_n^a) + (1 - \alpha)\mathbb{E}V(f_n^b,g_n^b)
\end{align*}
Taking the supremum over all such $(f_n^a,g_n^a)$ and $(f_n^b,g_n^b)$ yields
\begin{align*}
\mathcal{B}(x,y) \geq \alpha \mathcal{B}(x + t_1,y + \epsilon t_1) + (1-\alpha)\mathcal{B}(x + t_2, y + \epsilon t_2)
\end{align*}

The last thing that must be checked is that $\mathcal{B}$ is finite on all of $\mathbb{R}^2$. We know $\mathcal{B} \geq V > -\infty$ so we only need to check $\mathcal{B}(x,y) < \infty$. The initial condition says that $\mathcal{B}(x,\pm x) \leq 0$. Now suppose $|x|\neq |y|$ and let $(f_n,g_n)\in \mathcal{M}(x,y)$. We construct another martingale $(f_n',g_n')$ as follows (again we may assume our martingales are defined on $([0,1),\mathcal{B},|\cdot|)$.

\begin{align*}
(f'_0,g'_0) &\equiv (\frac{x+y}{2},\frac{x+y}{2}) \\
(f'_1,g'_1)(\omega) &=
	\begin{cases}
		(x,y) & \omega\in [0,\frac{1}{2}) \\
		(y,x) & \omega\in [\frac{1}{2},1) \\
	\end{cases} \\
(f'_n,g'_n)(\omega) &=
	\begin{cases}
		(f_{n-1},g_{n-1})(2\omega) & \omega\in [0,\frac{1}{2}) \\
		(y,x) & \omega\in [\frac{1}{2},1)\\
	\end{cases}
\end{align*}
\noindent
where the last equality holds for $n \geq 2$. Note that $f'$ is a $\pm 1$ transform of $g'$ and 
$$0 \geq \mathbb{E}V(f_n',g_n') = \frac{1}{2}V(y,x) + \frac{1}{2}\mathbb{E}V(f_{n-1},g_{n-1})$$
\noindent
Taking the supremum over all $(f_n,g_n)\in \mathcal{M}(x,y)$ gives $\mathcal{B}(x,y) \leq -V(y,x) < \infty$ and we're done.

\end{proof}

Equipped with Theorem 2.7, we can do more than just prove our inequality. We can find the optimal constant $\beta_p$. If $B$ does not exist when $\beta_p < C_p$ but $B$ does exist when $\beta_p \geq C_p$, then $C_p$ is optimal. We will not construct $B$ here because our intention is not to reproduce Burkholder's result, but rather demonstrate the utility of his approach and provide a blueprint for proving our new inequality.

\section{A modification of the Chang-Wilson-Wolff Inequality}

Let $f \in L^1(I)$ for some real interval $I$. Let $\mathcal{F}_n$ be the $\sigma$ algebra generated by the dyadic subintervals of $I$ of length $|I|2^{-n}$ and let $\mathcal{F} = \bigcup \mathcal{F}_n$. Then $f_n = \mathbb{E}[f|\mathcal{F}_n]$ is a martingale on the probability space $(I,\mathcal{F},\frac{|\cdot|}{|I|})$. Due to the dyadic filtration, such a martingale is called a dyadic martingale.

Now define $Sf_n = \|\{df_n\}\|_{\ell^2} = \sqrt{\sum_{k=1}^n (df_k)^2}$ and let $Sf = \lim\limits_{n \to \infty} Sf_n$. In the context of probability, the sequence $\{Sf_n\}$ is called the quadratic variation of the martingale $\{f_n\}$. In the Littlewood-Paley theory of harmonic analysis, $Sf$ is called the dyadic square function of $f$ and is the discrete counterpart of the Lusin area function, whose definition would take us too far from our present goal. For a description of the connection between the square function and the Luzin area function, refer to \cite{Llorente}. For a survey of the role of square functions in harmonic analysis, consult \cite{Stein}.

In \cite{CWW}, the authors answer a question posed by the illustrious harmonic analyst Elias Stein. The question concerns the size of the functions in a certain subspace of BMO: What is the sharp order of local integrability of a function $f$ with a pointwise bounded square function $Sf$? Suppose for simplicity and concreteness that $f \in L^1([0,1))$. The authors show that
\begin{equation}
\int_0^1 e^{f(x)-\langle f \rangle_{[0,1)}}dx \leq e^{\frac{1}{2}\|Sf\|^2_{L_{[0,1)}^\infty}}
\end{equation}
This result, called the Chang-Wilson-Wolff inequality, can be used to show that if $Sf$ is bounded, then $f$ is exponentially square integrable \cite{Pipher}.

In \cite{SV}, the authors explore a related question: Is there a constant $\alpha$ such that 

\begin{equation}
\int_0^1 e^{f(x)-\langle f \rangle_{[0,1)}}dx \leq \int_0^1 e^{\alpha(Sf)^2(x)}dx
\end{equation}
Inspired by their work, our goal in this section is to use the Bellman function technique to find the smallest such $\alpha$. In the language of probability, given a dyadic martingale with $f_0 = 0$, is there a constant $\alpha$ such that for each $n \in \mathbb{N}$
\begin{equation}
\mathbb{E}e^{f_n} \leq \mathbb{E}e^{\alpha(Sf_n)^2}
\end{equation}
If so, what is the smallest such $\alpha$? We shall use Burkholder's approach as a blueprint. As before, the veracity of (3.3) can be recast as a question of the existence of a special function satisfying three properies. The nature of these properties are gleaned from the details of the inequality we are after.

\begin{theorem}
(3.3) holds if and only if there exists a function $B:\mathbb{R} \times [0,\infty) \to \mathbb{R}$ with the following three properties.
\begin{enumerate}
\item (Majorization) $B(x,y) \geq e^x-e^{\alpha y} \coloneqq V(x,y)$
\item (Concavity) $\frac{B(x + \delta, y + \delta^2) + B(x - \delta, y + \delta^2)}{2} \leq B(x,y)$ for any $\delta \in \mathbb{R}$
\item (Initial condition) $B(0,0) \leq 0$
\end{enumerate}
\end{theorem}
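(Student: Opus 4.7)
The plan is to follow the template of Theorem 2.7, replacing concavity along lines of slope $\pm 1$ with the parabolic concavity of condition 2---a choice dictated by the fact that a $\pm\delta$ increment to a dyadic martingale simultaneously moves $f_n$ by $\pm\delta$ and increases $(Sf_n)^2$ by $\delta^2$. The majorant $V(x,y) = e^x - e^{\alpha y}$ is chosen so that (3.3) becomes $\mathbb{E} V(f_n, (Sf_n)^2) \leq 0$, and the initial condition reflects $(f_0, (Sf_0)^2) = (0,0)$.

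For the sufficient direction, I would mirror Lemma 2.6 and the proof of Theorem 2.5. A simple dyadic martingale has $df_{n+1} = \pm \delta_n$ on the split of each $\mathcal{F}_n$-atom (with $\delta_n$ $\mathcal{F}_n$-measurable), so applying condition 2 conditionally on each atom shows that $\{B(f_n, (Sf_n)^2)\}$ is a supermartingale. Telescoping and invoking majorization yields
\begin{equation*}
    \mathbb{E} V(f_n, (Sf_n)^2) \leq \mathbb{E} B(f_n, (Sf_n)^2) \leq B(f_0, (Sf_0)^2) = B(0,0) \leq 0,
\end{equation*}
which is exactly (3.3).

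For the necessary direction, define the candidate
\begin{equation*}
    \mathcal{B}(x,y) := \sup\{\mathbb{E} V(f_n, y + (Sf_n)^2) : (f_n) \text{ a simple dyadic martingale with } f_0 = x\}.
\end{equation*}
Majorization is immediate from the constant martingale, and the initial condition $\mathcal{B}(0,0) \leq 0$ is just a restatement of (3.3). For concavity, splice as in Theorem 2.7: given $(f_n^a)$ starting at $x + \delta$ and $(f_n^b)$ starting at $x - \delta$ on $([0,1), \mathcal{B}, |\cdot|)$, build a new dyadic martingale starting at $x$ whose first step is $\pm\delta$ and which, on each half of $[0,1)$, rescales and inserts one of the two input martingales. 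The first step contributes $\delta^2$ to the quadratic variation and subsequent steps inherit the quadratic variations of the inputs, so averaging and passing to the suprema over $f^a, f^b$ yields condition 2.

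The main obstacle is showing that $\mathcal{B}$ is finite on $\mathbb{R} \times [0, \infty)$. Substituting $h_n = f_n - x$ and applying (3.3) gives $\mathcal{B}(x,y) \leq 0$ whenever $x \leq \alpha y$, in particular $\mathcal{B}(0,y) \leq 0$ for every $y \geq 0$; but for $x > \alpha y$ the naive bound $(e^x - e^{\alpha y}) \mathbb{E} e^{\alpha (Sf_n)^2}$ can be made arbitrarily large since the second factor admits no universal bound. The remedy, analogous to Burkholder's auxiliary splicing, is to prepend to the given martingale a short prefix $(p_k)$ with $p_0 = 0$ that reaches $(x,y)$ on a subset $A$ of positive measure while landing at explicit bounded values on $A^c$; applying $\mathcal{B}(0,0) \leq 0$ to the composite then bounds $|A| \mathbb{E} V(f_n, y + (Sf_n)^2)$ above by a finite constant depending only on the prefix, giving $\mathcal{B}(x,y) \leq C/|A| < \infty$. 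Carrying out this construction carefully is, I anticipate, the technical heart of the proof.
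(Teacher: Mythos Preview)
Your supermartingale argument for the forward direction and your splicing argument for the concavity of $\mathcal{B}$ are exactly what the paper does, and your reduction $\mathcal{B}(x,y)\le 0$ for $x\le\alpha y$ is correct. The only real divergence is in the finiteness argument, and here your prefix construction has a gap that the paper avoids by a different route.

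The paper does not try to splice a prefix reaching an arbitrary $(x,y)$. Instead it observes the scaling identity
\[
\mathcal{B}\Bigl(x+\delta,\;y+\tfrac{\delta}{\alpha}\Bigr)=e^{\delta}\,\mathcal{B}(x,y),
\]
which follows directly from the form of $V$. This collapses the problem to showing $\mathcal{B}(x,0)<\infty$ for $x>0$. That is done by rewriting the concavity inequality on the $x$-axis as
\[
\mathcal{B}(x+\delta-\alpha\delta^{2},0)\le 2e^{-\alpha\delta^{2}}\mathcal{B}(x,0)-V(x-\delta-\alpha\delta^{2},0),
\]
fixing a small $\delta$ with $\delta-\alpha\delta^{2}>0$, and iterating from $\mathcal{B}(0,0)\le 0$ along the arithmetic progression $n(\delta-\alpha\delta^{2})$.

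Your prefix idea runs into trouble precisely on this boundary. A dyadic martingale started at $0$ cannot hit value $x\neq 0$ with accumulated quadratic variation exactly $0$, so the construction simply does not produce a set $A$ when $y=0$. More subtly, for $y>0$ but small, any admissible prefix reaching $(x,y)$ must use at least $m\ge x^{2}/y$ steps (Cauchy--Schwarz on the increments), so $|A|\le 2^{-m}$ and your bound $C/|A|$ explodes as $y\downarrow 0$; you cannot recover $\mathcal{B}(x,0)$ by a limiting argument. The homogeneity identity sidesteps this entirely and is the missing ingredient you should add; once you have it, the prefix construction becomes unnecessary.
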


\noindent
\textbf{Remark:} As before, property 2 has a probabilistic interpretation. Given any discrete random variable $\xi$ with $\mathbb{P}(\xi = \delta) = \mathbb{P}(\xi = -\delta) = \frac{1}{2}$, we have $\mathbb{E}B(x + \xi, y + \xi^2) \leq B(x,y)$.

\begin{proof}
The proof is the same as those of Theorems 2.5 and 2.7 in the previous section, only adapted to the present inequality. Keep in mind that we are assuming throughout $\{f_n\}$ is a simple dyadic martingale with $f_0=0$. First, assume $B$ exists. Then $B(f_n,(Sf)^2_n)$ is a supermartingale. Indeed,
\begin{align}
\mathbb{E}[B(f_{n+1},(Sf)^2_{n+1})|\mathcal{F}_n] &= \mathbb{E}[B(f_n + df_{n+1}, (Sf)^2_n + (df)^2_{n+1})|\mathcal{F}_n]\\
&\leq \mathbb{E}[B(f_{n},g_{n})|\mathcal{F}_n]\\
&= B(f_n,g_n)
\end{align}
where (3.5) is from applying the concavity property. Therefore, we have
\begin{align}
\mathbb{E}V(f_n,(Sf)^2_n) &\leq \mathbb{E}B(f_{n},(Sf)^2_{n})\\
&= \mathbb{E}\big[\mathbb{E}[B(f_{n},(Sf)^2_{n})|\mathcal{F}_{n-1}]\big]\\
&\leq \mathbb{E}B(f_{n-1},(Sf)^2_{n-1})
\end{align}
where (3.7) follows from majorization and (3.9) from the fact that $\{B(f_n,(Sf)^2_n)\}$ is a supermartingale.
Repeating the argument n times, we get
$$\mathbb{E}V(f_n,(Sf)^2_n) \leq \mathbb{E}B(f_0,(Sf)^2_0) = \mathbb{E}B(0,0) \leq 0$$
To prove the other direction, we assume (3.3) holds, and construct $B$. As before, we define the Bellman function $\mathcal{B}(x,y) \coloneqq \sup\{\mathbb{E}V(f_n,(Sf)^2_n) : (f_n,(Sf)^2_n)\in \mathcal{M}(x,y)\}$ where $\mathcal{M}(x,y)$ is the set of all $\mathbb{R} \times [0,\infty)$ valued processes $(f_n,(Sf)^2_n)$ such that $\{f_n\}$ is a dyadic martingale with $f_0 = x$ and $(Sf)^2_n = y + \sum_{k=1}^n (df_k)^2$ where $df_0 \coloneqq 0$. Again, as before, we show that $\mathcal{B}$ satisfies the three properties.

Majorization follows from observing the constant pair $(x,y) \in \mathcal{M}(x,y)$ and the initial condition follows from (3.3). Once again, we can get concavity by a splicing argument.

Choose any $(f_n^a,(Sf_n^a)^2) \in \mathcal{M}(x + \delta, y + \delta^2)$ and $(f_n^b,(Sf_n^b)^2) \in \mathcal{M}(x - \delta, y + \delta^2)$. We may assume the pairs are given on $([0,1),\mathcal{B},|\cdot|)$. We will define another martingale by "splicing" these two. Let $(f_0,(Sf_0)^2) \equiv (x,y)$ and for $n \geq 0$
\[ (f_{n+1},(Sf_{n+1})^2)(\omega) = 
	\begin{cases} 
		(f_n^a,(Sf_n^a)^2)(2\omega) & \omega \in [0,\frac{1}{2}) \\
		(f_n^b,(Sf_n^b)^2)(2(\omega - \frac{1}{2})) & \omega \in [\frac{1}{2},1) \\
	\end{cases}
\]
One can check that $\{(f_n,(Sf_n)^2)\} \in \mathcal{M}(x,y)$, and
\begin{align*}
\mathcal{B}(x,y) \geq \mathbb{E}V(f_n,(Sf_n)^2) = \frac{\mathbb{E}V(f_n^a,(Sf_n^a)^2) + \mathbb{E}V(f_n^b,(Sf_n^b)^2)}{2}
\end{align*}
Taking the supremum over all such $(f_n^a,(Sf_n^a)^2)$ and $(f_n^b,(Sf_n^b)^2)$ yields
\begin{align*}
\frac{\mathcal{B}(x + \delta, y + \delta^2) + \mathcal{B}(x - \delta, y + \delta^2)}{2} \leq \mathcal{B}(x,y)
\end{align*}
\noindent
Finally, we must show $\mathcal{B}$ is finite on $\mathbb{R}\times [0,\infty)$. We know $-\infty < V \leq \mathcal{B}$, so we only need to show $\mathcal{B} < \infty$. It follows from the definition of $\mathcal{B}$ that $\mathcal{B}(x + \delta, y + \frac{\delta}{\alpha}) = e^\delta \mathcal{B}(x,y)$ for any $\delta \in \mathbb{R}$ such that $y + \frac{\delta}{\alpha} \geq 0$. Hence it suffices to show that $\mathcal{B}$ is finite along the x axis. Using the concavity property, we have

\begin{align*}
\mathcal{B}(x,0) &\geq \frac{\mathcal{B}(x + \delta, \delta^2) + \mathcal{B}(x - \delta, \delta^2)}{2}\\
&= \frac{e^{\alpha\delta^2}\mathcal{B}(x + \delta - \alpha\delta^2, 0) + e^{\alpha\delta^2}\mathcal{B}(x - \delta - \alpha\delta^2, 0)}{2}
\end{align*}
\noindent
Therefore,
\begin{align*}
\mathcal{B}(x+\delta-\alpha\delta^2, 0) &\leq 2e^{-\alpha\delta^2}\mathcal{B}(x,0) - \mathcal{B}(x-\delta-\alpha\delta^2, 0)\\
&\leq 2e^{-\alpha\delta^2}\mathcal{B}(x,0) - V(x-\delta-\alpha\delta^2, 0)
\end{align*}
\noindent
The initial condition says that $\mathcal{B}(0,0) \leq 0,$ and we can see from the definition that $\mathcal{B}$ is non-decreasing in the positive x direction, so $\mathcal{B}(x,0) \leq 0$ for all $x \leq 0$. Furthermore, this monotonicity means it suffices to show $\mathcal{B}(x_n,0) < \infty$ for some sequence $x_n \to \infty$. If we take $\delta$ small enough that $\delta - \alpha\delta^2 > 0,$ we have 
\begin{align*}
\mathcal{B}(\delta-\alpha\delta^2, 0) &\leq 2e^{-\alpha\delta^2}\mathcal{B}(0,0) - V(-\delta-\alpha\delta^2, 0) < \infty \\
\mathcal{B}(2(\delta-\alpha\delta^2), 0) &\leq 2e^{-\alpha\delta^2}\mathcal{B}(\delta-\alpha\delta^2,0) - V(-2\alpha\delta^2, 0) < \infty\\
\mathcal{B}(3(\delta-\alpha\delta^2), 0) &\leq 2e^{-\alpha\delta^2}\mathcal{B}(2(\delta-\alpha\delta^2),0) - V(\delta - 3\alpha\delta^2, 0) < \infty\\
\dots\\
\mathcal{B}(n(\delta-\alpha\delta^2), 0) &\leq 2e^{-\alpha\delta^2}\mathcal{B}((n-1)(\delta-\alpha\delta^2),0) - V((n-2)\delta - n\alpha\delta^2, 0) < \infty\\
\end{align*}

\end{proof}

\subsection*{Searching for a Bellman Function Candidate}

What remains is to find a function $B$ so that we may apply Theorem 3.1. Our reasoning along the way needn't be rigorous or even correct because once we arrive at a candidate $B$, we will prove that it has the three properties. The purpose of this section is to demonstrate how one might approach the task of searching for $B$. Our approach will be to build a PDE. It should be noted that, a priori, we don't even know that a differentiable $B$ exists, but the purpose of this section is not to rigorously prove anything. Rather, it's to explain how one might arrive at a candidate, which we can then test for the three required properties.

The concavity condition $\frac{B(x - \delta, y + \delta^2) + B(x + \delta, y + \delta^2)}{2} \leq B(x,y)$ suggests that $B$ is concave along parabolic paths. Thus, we expect that $\frac{d^2}{d\delta^2}B(x + \delta, y + \delta^2) \leq 0$.
\begin{align*}
\frac{d^2}{d\delta^2}B(x + \delta, y + \delta^2) &= B_{xx}(x+\delta, y + \delta^2) + B_{xy}(x+\delta, y + \delta^2)4\delta\\
&+ B_{yy}(x+\delta, y + \delta^2)2\delta^2 + 2B_y(x+\delta, y + \delta^2)\\
&\leq 0
\end{align*}
Evaluating at $\delta = 0$ yields
\begin{equation}
B_{xx}(x,y) + 2B_y(x,y) \leq 0
\end{equation}
So $B$ is a subsolution of the reverse heat equation.

A common strategy is to search for Bellman candidates which share certain homogeneity properties with $V$. In our case, we have
$$V\big(x + \delta,y + \frac{\delta}{\alpha}\big) = e^{x+\delta} - e^{\alpha(y + \frac{\delta}{\alpha})} = e^\delta V(x,y)$$
Let $\delta = -\alpha y$. Then
$$V(x - \alpha y, 0) = e^{-\alpha y}V(x,y)$$
Note that functions with this property are completely determined by their values along the x axis. We shall search for a $B$ that shares this property.\\
Define $f(x) = B(x, 0)$ and assume $B$ has the form
\begin{equation}
B(x,y) = e^{\alpha y}f(x-\alpha y)
\end{equation}
Plugging (3.11) into (3.10) and replacing the inequality with an equation gives
\begin{equation}
e^{\alpha y}f''(x-\alpha y) - 2\alpha e^{\alpha y} f'(x-\alpha y) +2\alpha e^{\alpha y} f(x-\alpha y) = 0
\end{equation}
Dividing through by $e^{\alpha y}$ reveals a second order ODE with constant coefficients. The solutions to the characteristic equation are $\alpha \pm \sqrt{\alpha^2 -2\alpha}$. If $\alpha < 2$ then these solutions are complex, meaning $f$ oscillates. Thus $B(x,0)=f(x)$ will have no hope of dominating $V(x,0) = e^x - 1$. On the other hand, if $\alpha = 2$, then $f(x) = C_1xe^{2x} + C_2e^{2x}$. We must select $C_1$  and $C_2$ such that $f(x) = B(x,0) \geq V(x,0) = e^x - 1$ (majorization) and $f(0) = B(0,0) \leq V(0,0) = 0$ (initial condition). Given these constraints, we find $f(x)=xe^{2x}$ and so our Bellman function candidate is $B(x,y) = (x - 2y)e^{2x - 2y}$. Unfortunately, this is not a valid $B$ as the concavity property is violated. For example, $\frac{B(1 - \delta, 1 + \delta^2) + B(1 + \delta, 1 + \delta^2)}{2} \to 0$ as $\delta \to \infty$ but $B(1,1) = -1$. Interestingly, although this line of thinking produced the wrong $B$, it produced the correct $\alpha$. It turns out $2$ is the sharp constant in (3.3).

One problem is that, assuming $B$ is smooth, (3.10) is a necessary but insufficient condition for concavity along parabolic paths. It only tests for concavity at the vertex of each path. Also, turning our differential inequality
\begin{equation}
f'' - 2\alpha f' +2\alpha f \leq 0
\end{equation}
into an equation was unjustified even though doing so produced the optimal $\alpha$.

At this point, there is no shame in resorting to guess and check. That is, looking for solutions to (3.13) with $f(0) = 0$ and $f'(0)=1$ (to ensure compliance with the majorization property and the initial condition) and testing the corresponding $B(x,y)$ for the concavity property. $f(x) = e^{2x} - e^x$ is such a function (for $\alpha = 2$) and the associated Bellman candidate is $B(x,y) = e^{2x - 2y} - e^x$. If we prove this $B$ has the three desired properties, we shown (3.3) holds with $\alpha = 2$.

\begin{theorem}
Suppose $\{f_n\}$ is a dyadic martingale with $f_0=0$. Then
$$\mathbb{E}e^{f_n} \leq \mathbb{E}e^{2(Sf_n)^2}$$
\end{theorem}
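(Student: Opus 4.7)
The plan is to invoke Theorem 3.1 with $\alpha = 2$: it suffices to exhibit a function $B : \mathbb{R}\times[0,\infty)\to\mathbb{R}$ satisfying the majorization, concavity, and initial conditions listed there. I would use the candidate already identified in the preceding discussion, $B(x,y) = e^{2x-2y} - e^x$, and verify the three properties directly.

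Two of the three should fall out immediately: the initial condition reads $B(0,0) = 1 - 1 = 0$, and the majorization $e^{2x-2y} - e^x \geq e^x - e^{2y}$ rearranges to $e^{2x-2y} + e^{2y} \geq 2e^x$, which is AM-GM applied to the pair $\{e^{2x-2y},\, e^{2y}\}$ (whose geometric mean is $e^x$). The main step will be concavity. A direct expansion gives
\begin{equation*}
\frac{B(x+\delta, y+\delta^2) + B(x-\delta, y+\delta^2)}{2} = e^{2x-2y}\, e^{-2\delta^2}\cosh(2\delta) - e^x\cosh(\delta),
\end{equation*}
so after subtracting $B(x,y) = e^{2x-2y} - e^x$, the concavity inequality becomes
\begin{equation*}
e^{2x-2y}\bigl(e^{-2\delta^2}\cosh(2\delta) - 1\bigr) \leq e^x\bigl(\cosh(\delta) - 1\bigr).
\end{equation*}
The right-hand side is nonnegative since $\cosh(\delta) \geq 1$, so it will suffice to show that the left-hand bracket is nonpositive, i.e., $\cosh(2\delta) \leq e^{2\delta^2}$. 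This is the classical sub-Gaussian bound $\cosh(t) \leq e^{t^2/2}$ at $t = 2\delta$, which I would verify term-by-term from the Taylor series; it reduces to the elementary inequality $(2k)!/k! = (k+1)(k+2)\cdots(2k) \geq 2^k$.

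The real obstacle in the argument is not any one of these verifications but rather locating the candidate $B$ itself, which was already carried out by the preceding PDE-and-ODE analysis. Once $B(x,y) = e^{2x-2y} - e^x$ is in hand, the verification collapses to AM-GM and a Rademacher sub-Gaussian estimate, and Theorem 3.1 then delivers the desired inequality.
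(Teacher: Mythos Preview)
Your proposal is correct and is essentially the paper's own proof: same candidate $B(x,y)=e^{2x-2y}-e^{x}$, same reduction of concavity to the pair of bounds $\cosh t\ge 1$ and $\cosh t\le e^{t^{2}/2}$, and your AM--GM argument for majorization is exactly the inequality $1-e^{t}\le e^{-t}-1$ the paper invokes (both amount to $e^{t}+e^{-t}\ge 2$).
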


\begin{proof}
$B(x,y) = e^{2x - 2y} - e^x$ obeys the three properties from Theorem 3.1. We begin with majorization. Using the inequality $1 - e^t \leq e^{-t} - 1$ we have
\begin{align*}
V(x,y) = e^x - e^{2y} = e^x(1 - e^{2y - x}) \leq e^x(e^{x - 2y} - 1) = B(x,y)
\end{align*}
Next, we show that $B$ satisfies the concavity condition.
\begin{align*}
&\frac{B(x+\delta,y+\delta^2)+B(x-\delta,y+\delta^2)}{2}\\
&= \frac{e^{2(x+\delta) - 2(y+\delta^2)} - e^{x+\delta} + e^{2(x-\delta) - 2(y+\delta^2)} - e^{x-\delta}}{2}\\
&= e^{2x - 2y - 2\delta^2}\cosh2\delta - e^x\cosh\delta\\
&\leq e^{2x - 2y} - e^x = B(x,y)
\end{align*}
In the last line, we use the inequality $1 \leq \cosh t \leq e^\frac{t^2}{2}$.
Lastly, the initial condition is immediate. $B(0,0) = 0$.
\end{proof}

At this point, we've answered half of our original question: (3.3) holds with $\alpha = 2$. To show that $\alpha = 2$ is minimal, we must show that the three properties in Theorem 3.1 are mutually incompatible when $\alpha < 2$. Then Theorem 3.1 implies (3.3) is false for this range of $\alpha$.

Our strategy will be to prove that the Bellman function cannot possess the three properties assuming it is twice continuously differentiable. Then we will drop this assumption using a mollification argument. This scheme will first require a couple of lemmas about a class of differential inequalities. \footnote{The author thanks Iosif Pinelis for his assistance provided via \texttt{mathoverflow.net} in the proof of these lemmas.}

\begin{lemma}
If $g:\mathbb{R} \to \mathbb{R}$ satisfies $g'' \leq -bg$ where $b$ is some positive constant and $g(x_0) > 0$, then there is some $x > x_0$ such that $g(x) = 0$.
\end{lemma}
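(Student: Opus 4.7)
The plan is to run a Sturm-type comparison argument against the explicit solution $h(x) = \sin(\sqrt{b}(x-x_0))$ of the associated equation $h'' = -bh$, which vanishes at $x_0$ and at $x_0 + T$, where $T = \pi/\sqrt{b}$, and is strictly positive on the open interval between them. Intuitively, $g$ must oscillate at least as fast as $h$ because its second derivative is pushed even more sharply in the direction of $-bg$, so $g$ cannot stay positive across an entire half-period of $h$.

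Concretely, I would argue by contradiction. Suppose $g(x) > 0$ for every $x \in [x_0, x_0 + T]$. Form the Wronskian-like quantity
\begin{equation*}
W(x) = g(x) h'(x) - g'(x) h(x),
\end{equation*}
and differentiate:
\begin{equation*}
W'(x) = g(x) h''(x) - g''(x) h(x) = -b\, g(x) h(x) - g''(x) h(x).
\end{equation*}
Since $h(x) \geq 0$ on $[x_0, x_0+T]$ and the hypothesis $g'' \leq -bg$ gives $-g''(x) \geq b\, g(x)$, we obtain $W'(x) \geq -b\, g(x) h(x) + b\, g(x) h(x) = 0$, so $W$ is non-decreasing on that interval.

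Now evaluate at the endpoints. Since $h(x_0) = h(x_0+T) = 0$, $h'(x_0) = \sqrt{b}$, and $h'(x_0+T) = -\sqrt{b}$, the monotonicity $W(x_0+T) \geq W(x_0)$ reads
\begin{equation*}
-\sqrt{b}\, g(x_0 + T) \geq \sqrt{b}\, g(x_0) > 0,
\end{equation*}
forcing $g(x_0+T) < 0$ and contradicting the assumption that $g$ is positive on $[x_0, x_0+T]$. Hence $g$ cannot remain positive throughout $[x_0, x_0+T]$, and because $g(x_0) > 0$ and $g$ is continuous, the intermediate value theorem yields a point $x \in (x_0, x_0 + T]$ with $g(x) = 0$.

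I do not anticipate a major obstacle here; this is a standard Sturm comparison argument, and the main thing to be careful about is the correct sign bookkeeping in the boundary terms of $W$ and the fact that $g'' \leq -bg$ is an inequality rather than an equality, which is exactly what makes $W$ monotone in the right direction. A minor point is whether $g$ is assumed $C^2$ so that $W'$ is meaningful pointwise; if only $g'$ is absolutely continuous, the same argument goes through by writing $W(x_0+T) - W(x_0) = \int_{x_0}^{x_0+T} W'(x)\, dx$ and treating $g''$ as the a.e.\ derivative of $g'$.
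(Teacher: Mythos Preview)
Your proof is correct. The Wronskian/Sturm comparison argument is clean and the sign bookkeeping is right: with $h(x)=\sin(\sqrt{b}(x-x_0))$ and $W=gh'-g'h$, the inequality $g''\le -bg$ together with $h\ge 0$ on $[x_0,x_0+T]$ gives $W'\ge 0$, and the endpoint evaluation forces $g(x_0+T)<0$, contradicting positivity.

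This is a genuinely different route from the paper's. For this lemma the paper argues directly: assuming $g>0$ on $[x_0,\infty)$, one has $g''<0$, so $g'$ is decreasing with a (possibly $-\infty$) limit $L$; if $L<0$ then $g\to-\infty$, while if $L\ge 0$ then $g\ge g(x_0)$ and a mean-value estimate on $g'$ produces a point where $g''$ is nearly $0$, contradicting $g''\le -bg(x_0)$. The paper then proves a separate quantitative lemma, via the energy $E=g'^2+bg^2$ and an arcsine integration, to pin down that the zero occurs within $\pi/\sqrt{b}$. Your single argument already delivers that quantitative bound, so you effectively get both lemmas at once; the trade-off is that the paper's first proof is completely elementary (no comparison function needed), whereas yours requires knowing to introduce $h$ and the Wronskian, but rewards you with the sharp half-period estimate immediately.
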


\begin{proof}
Of course, by the intermediate value theorem, it suffices to show that eventually $g(x) \leq 0$. Suppose for contradiction that $g(x) > 0$ for all $x>x_0$. Then on $[x_0,\infty)$, $g'$ is monotonically decreasing because $g'' \leq -bg < 0$. Thus $\lim\limits_{x \to \infty}g'(x)$ exists and is either finite or $-\infty$. Call this limit $L$.\\

If $L<0$, then we are done because $g'(x)$ is eventually bounded above by $L+\epsilon < 0$ and thus $g(x) \to -\infty$.\\

On the other hand, suppose that $L \geq 0$. Then $g'(x)\geq 0$ for all $x \geq x_0$, and thus for such $x$ we have $g(x) \geq g(x_0)$.\\

Now fix $\epsilon \in \left(0,\frac{bg(x_0)}{2}\right)$. Select $c$ such that $|g'(x) - L| < \epsilon$ when $x>c$. Lastly, choose $x_1$ and $x_2$ such that $c<x_1<x_2$ and $x_2 - x_1 > 2$. Then for some $t \in (x_1, x_2)$ we have

\begin{align*}
g''(t) = \frac{g'(x_2) - g'(x_1)}{x_2 - x_1} &\geq \frac{L-\epsilon - (L + \epsilon)}{x_2 - x_1}\\
&= \frac{-2\epsilon}{x_2 - x_1}\\
&\geq -\epsilon\\
&\geq \frac{-bg(x_0)}{2}
\end{align*}

Therefore,
$$\frac{-bg(x_0)}{2} \leq g''(t) \leq -bg(t) \leq -bg(x_0)$$
which gives our contradiction.

\end{proof}

\begin{lemma}
Suppose $g:\mathbb{R} \to \mathbb{R}$ satisfies $g'' \leq -bg$ where $b$ is some real positive constant. If $g(x_0) > 0$ then $g(x) = 0$ for some $x \in \big[x_0, x_0 + \frac{\pi}{\sqrt{b}}\big]$.
\end{lemma}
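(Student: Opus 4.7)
The plan is to use a Sturm-type comparison argument against the explicit solution of the associated equality $\phi'' + b\phi = 0$. Specifically, set $x_1 = x_0 + \pi/\sqrt{b}$ and let $\phi(x) = \sin\!\bigl(\sqrt{b}(x-x_0)\bigr)$, which vanishes at both endpoints, is strictly positive on $(x_0,x_1)$, and satisfies $\phi'(x_0) = \sqrt{b}$, $\phi'(x_1) = -\sqrt{b}$. The aim is to assume, for contradiction, that $g(x) > 0$ for every $x \in [x_0,x_1]$ (this hypothesis is what one would rule out using the previous lemma qualitatively; here we need a quantitative refinement), and derive a contradiction from the endpoint behaviour of the Wronskian.

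The key computation is with the Wronskian $W(x) = g(x)\phi'(x) - g'(x)\phi(x)$. Differentiating and using $\phi'' = -b\phi$ gives
\begin{equation*}
W'(x) = g(x)\phi''(x) - g''(x)\phi(x) = -\phi(x)\bigl(g''(x) + b g(x)\bigr).
\end{equation*}
Since $g'' + bg \leq 0$ by hypothesis and $\phi \geq 0$ on $[x_0,x_1]$, we obtain $W' \geq 0$ throughout $[x_0,x_1]$, so $W$ is non-decreasing on this interval. Evaluating at the endpoints yields $W(x_0) = g(x_0)\sqrt{b} > 0$ and $W(x_1) = -g(x_1)\sqrt{b}$. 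Monotonicity then forces $-g(x_1)\sqrt{b} \geq g(x_0)\sqrt{b} > 0$, so $g(x_1) < 0$. This contradicts the standing assumption that $g$ is positive on all of $[x_0,x_1]$.

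Thus $g$ cannot remain strictly positive throughout $[x_0, x_0 + \pi/\sqrt{b}]$. Since the existence of $g''$ implies $g$ is continuous, and $g(x_0) > 0$, the intermediate value theorem then delivers a zero of $g$ in this interval, as desired.

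I do not anticipate a real obstacle here: the Sturm comparison trick with the Wronskian is standard, and the sign of $W'$ falls out immediately from the hypothesis $g'' + bg \leq 0$ because $\phi$ is non-negative precisely on the interval of interest. The only mild subtlety is to be clear that the hypothesis $g'' \leq -bg$ implicitly assumes $g$ is twice differentiable, which in particular gives continuity so that the final IVT step is justified.
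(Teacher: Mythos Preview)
Your proof is correct and takes a genuinely different route from the paper's.

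The paper argues via an energy function $E(x) = g'(x)^2 + b\,g(x)^2$, splits into cases according to the sign of $g'(x_0)$, and in each case integrates the inequality $1 \leq |g'|/\sqrt{E - bg^2}$ to obtain an arcsine bound of $\pi/(2\sqrt{b})$ on the length of each sub-interval; this approach also relies on the preceding lemma to know in advance that a first zero exists. Your Sturm--Picone style argument with the Wronskian $W = g\phi' - g'\phi$ against $\phi(x) = \sin\bigl(\sqrt{b}(x-x_0)\bigr)$ is shorter, avoids the case split, and is self-contained: the monotonicity $W' = -\phi(g''+bg) \geq 0$ uses only $\phi \geq 0$ on $[x_0,x_1]$ and the hypothesis on $g$, so you in fact prove directly that $g(x_1) \leq -g(x_0) < 0$ without ever invoking the assumed positivity of $g$ on the interior. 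In other words, the contradiction framing is harmless but unnecessary; you could simply read off $g(x_1) < 0$ from the endpoint values of $W$ and apply the intermediate value theorem. The paper's method does yield the marginally finer information that when $g'(x_0) \leq 0$ the zero occurs within $\pi/(2\sqrt{b})$, but that refinement is not used anywhere downstream.
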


\begin{proof}
We break into two cases.\\

Case 1: $g'(x_0) \leq 0$\\
Let $x_1 = \min\{x>x_0: g(x)=0\}$. We know $x_1$ exists by the previous lemma. Note that $g''\leq -bg \leq 0$ on $[x_0,x_1]$ since $g\geq 0$ there. Hence $g'\leq 0 $ on $ [x_0,x_1]$ because $g'(x_0)\leq 0$ and $g'$ is decreasing.
Let $E(x) = g'(x)^2 + bg(x)^2$. Then for all $x \in [x_0,x_1]$
\begin{align*}
E'(x) &= 2g'(x)g''(x) + 2bg(x)g'(x)\\
&= 2g'(x)(g''(x) + bg(x))\\
&\geq 0
\end{align*}
Therefore $E(x_0)\leq E(x)$ for all $x\in[x_0,x_1]$, i.e., $g'(x_0)^2 + bg(x_0)^2 \leq g'(x)^2 + bg(x)^2$. Therefore, recalling that $g'(x)\leq$0, we have
$$1 \leq \frac{-g'(x)}{\sqrt{g'(x_0)^2 + bg(x_0)^2 - bg(x)^2}}$$
Integrating both sides over the interval $[x_0,x_1]$ yields
\begin{align*}
x_1 - x_0 &\leq \left.\frac{-1}{\sqrt{b}}\sin^{-1}\left(g(x)\sqrt{\frac{b}{g'(x_0)^2 + bg(x_0)^2}}\right)\right\vert_{x=x_0}^{x=x_1}\\
&= \frac{-1}{\sqrt{b}}\left[\sin^{-1}(0) - \sin^{-1}\left(\sqrt{\frac{bg(x_0)^2}{g'(x_0)^2 + bg(x_0)^2}}\right)\right]\\
&\leq \frac{-1}{\sqrt{b}}(0 - \sin^{-1}(1))\\
& = \frac{\pi}{2\sqrt{b}}
\end{align*}

Case 2: $g'(x_0) > 0$\\
This time we let $x_2 = \min\{x>x_0: g(x)=0\}$. Again we note that $g''\leq 0$ on $[x_0,x_2]$. Therefore, there is a point $x_1 \in [x_0,x_2]$ such that $g'(x_1) = 0$ with $g'>0$ on $[x_0,x_1]$ and $g'<0$ on $[x_1,x_2]$. From Case 1, we know that $x_2 - x_1 \leq \frac{\pi}{2\sqrt{b}}$. It suffices to show $x_1 - x_0 \leq \frac{\pi}{2\sqrt{b}}$.\\

Recall $E(x) = g'(x)^2 + bg(x)^2$ and $E'(x) = 2g'(x)(g''(x) + bg(x))$. Since $g' \geq 0$ on $[x_0,x_1]$, $E'(x) \leq 0$ there. Thus on that interval, we have $g'(x)^2 + bg(x)^2 \geq g'(x_1)^2 + bg(x_1)^2 = bg(x_1)^2$ which implies
$$1 \leq \frac{g'(x)}{\sqrt{bg(x_1)^2 - bg(x)^2}}$$
As before, we integrate both sides over $[x_0,x_1]$ to get
\begin{align*}
x_1 - x_0 &\leq \left.\frac{1}{\sqrt{b}}\sin^{-1}\left(\frac{g(x)}{g(x_1)}\right)\right\vert_{x=x_0}^{x=x_1}\\
&= \frac{1}{\sqrt{b}}\left[\sin^{-1}(1) - \sin^{-1}\left(\frac{g(x_0)}{g(x_1)}\right)\right]\\
&\leq \frac{\pi}{2\sqrt{b}}
\end{align*}

\end{proof}

\begin{theorem}
If $\mathcal{B}(x,0) \in \mathcal{C}^2(\mathbb{R})$, then $\alpha = 2$ is the minimal constant such that (3.3) holds. Here $\mathcal{B}$ is the theoretical Bellman function as in the the proof of Theorem 3.1.
\end{theorem}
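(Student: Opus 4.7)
The plan is, assuming $0<\alpha<2$ and that some Bellman function $\mathcal{B}$ with $\mathcal{B}(\cdot,0)\in C^2(\mathbb{R})$ satisfies the three properties of Theorem 3.1, to derive a contradiction via a Sturm-type oscillation argument based on Lemma 3.4. The range $\alpha\le 0$ can be dismissed at once: for any nonconstant dyadic martingale $\mathbb{E}e^{f_n}>1\ge\mathbb{E}e^{\alpha(Sf_n)^2}$ by Jensen, so (3.3) fails trivially. The real work concerns $0<\alpha<2$.

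First I would exploit the homogeneity $\mathcal{B}(x+\delta,y+\delta/\alpha)=e^\delta\mathcal{B}(x,y)$ already recorded in the proof of Theorem 3.1. Setting $\delta=-\alpha y$ yields $\mathcal{B}(x,y)=e^{\alpha y}f(x-\alpha y)$ with $f(x):=\mathcal{B}(x,0)$, so $\mathcal{B}$ is fully determined by its $C^2$ restriction to the $x$-axis and is itself $C^2$ on the whole half-plane. Next I would convert the concavity property into a pointwise differential inequality. For each fixed $(x,y)$, the map $\phi(\delta):=\mathcal{B}(x+\delta,y+\delta^2)$ satisfies $\phi(\delta)+\phi(-\delta)-2\phi(0)\le 0$; since this is an even $C^2$ function vanishing at $\delta=0$, the second-order Taylor expansion forces $\phi''(0)\le 0$. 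A direct computation gives $\phi''(0)=B_{xx}(x,y)+2B_y(x,y)$, so $B_{xx}+2B_y\le 0$ throughout. Substituting the homogeneous form and evaluating at $y=0$ produces the ODE inequality
$$f''(x)-2\alpha f'(x)+2\alpha f(x)\le 0.$$

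The substitution $g(x):=e^{-\alpha x}f(x)$ then transforms this into
$$g''(x)\le -\alpha(2-\alpha)\,g(x)=-b\,g(x),\qquad b:=\alpha(2-\alpha)>0.$$
Majorization gives $f(x)\ge V(x,0)=e^x-1$, so $g(x)\ge e^{(1-\alpha)x}-e^{-\alpha x}>0$ for every $x>0$. Taking $x_0=1$, Lemma 3.4 forces $g$ to vanish somewhere in $[1,1+\pi/\sqrt{b}]\subset(0,\infty)$, directly contradicting the strict positivity just established. Hence no $\alpha\in(0,2)$ admits such a $\mathcal{B}$, and combining with Theorem 3.2 shows that the minimal admissible constant is exactly $\alpha=2$.

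The main obstacle is the reduction from the integral midpoint concavity condition to the pointwise inequality $B_{xx}+2B_y\le 0$. The subtlety is that the assumption only directly controls $\phi$ at symmetric pairs $\pm\delta$, so extracting $\phi''(0)\le 0$ requires first noting that $\phi(\delta)+\phi(-\delta)-2\phi(0)$ is even with a double zero at the origin, and then reading off the sign of the leading Taylor coefficient; after that, the rest of the proof is straightforward ODE analysis plus an appeal to Lemma 3.4.
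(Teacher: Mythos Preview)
Your proposal is correct and follows essentially the same route as the paper: exploit the homogeneity $\mathcal{B}(x,y)=e^{\alpha y}f(x-\alpha y)$, convert the concavity condition into the second-order inequality $f''-2\alpha f'+2\alpha f\le 0$, substitute $g=e^{-\alpha x}f$ to obtain $g''\le -bg$ with $b=\alpha(2-\alpha)$, and then invoke a Sturm-type oscillation lemma to contradict majorization. The only cosmetic differences are that you extract the differential inequality by computing $\phi''(0)$ for $\phi(\delta)=\mathcal{B}(x+\delta,y+\delta^2)$ (the paper instead Taylor-expands the discrete inequality $\frac{f(x-\delta-\alpha\delta^2)+f(x+\delta-\alpha\delta^2)}{2}\le e^{-\alpha\delta^2}f(x)$ directly), and that you cite Lemma~3.4 whereas the weaker Lemma~3.3 already suffices here, since the precise location of the zero is irrelevant for this theorem.
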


\begin{proof}
Recall that $\mathcal{B}(x,y) = \sup\{\mathbb{E}V(f_n,(Sf)^2_n) : (f_n,(Sf)^2_n)\in \mathcal{M}(x,y)\}$ where $\mathcal{M}(x,y)$ is the set of all $\mathbb{R} \times [0,\infty)$ valued processes $(f_n,(Sf)^2_n)$ such that $\{f_n\}$ is a dyadic martingale with $f_0 = x$ and $(Sf)^2_n = y + \sum_{k=1}^n (df_k)^2$. By the proof of Theorem 3.1, $\mathcal{B}$ satisfies the majorization and concavity properies as well as the initial condition as long as (3.3) holds. We will show that if $\alpha < 2$, $\mathcal{B}$ does not obey these properties, and thus (3.3) is false.

From the definition of $\mathcal{B}$, we have $$e^{\alpha y}\mathcal{B}(x - \alpha y, 0) = \mathcal{B}(x,y)$$
This is essentially a consequence of the fact that $V$ has this property. Thus, as before, if we define $f(x) = \mathcal{B}(x,0)$ then $\mathcal{B}$ has the form
\begin{equation}
\mathcal{B}(x,y) = e^{\alpha y}f(x - \alpha y)
\end{equation}
Plugging (3.13) into the concavity property and setting $y=0$ gives
\begin{equation}
\frac{f(x - \delta -\alpha \delta^2) + f(x + \delta -\alpha \delta^2)}{2} \leq e^{-\alpha\delta^2}f(x)
\end{equation}
If we express $f$ as a second order Taylor polynomial in $\delta$ centered at $x$ (valid because we are assuming $f\in\mathcal{C}^2$), then (3.14) becomes
\begin{align*}
f(x) - \alpha\delta^2 f'(x) + \frac{\delta^2}{2} f''(x) + O(\delta^3) &\leq e^{-\alpha\delta^2}f(x)\\
&= f(x) - \alpha\delta^2 f(x) + O(\delta^4)
\end{align*}
Dividing through by $\frac{\delta^2}{2}$ and letting $\delta \to 0$, we get
\begin{equation}
f''(x) - 2\alpha f'(x) +2\alpha f(x) \leq 0
\end{equation}
Multiply both sides by $e^{-\alpha x}$ to get
\begin{equation*}
e^{-\alpha x}f''(x) - 2\alpha e^{-\alpha x}f'(x) +2\alpha e^{-\alpha x}f(x) = \left(e^{-\alpha x}f(x) \right)'' + (2\alpha - \alpha^2)e^{-\alpha x}f(x) \leq 0
\end{equation*}
Letting $g(x) \coloneqq e^{-\alpha x}f(x)$ and $b \coloneqq 2\alpha - \alpha^2$ we have $g'' + bg \leq 0$.

Suppose\footnote{Assuming $\alpha > 0$ is valid because if (3.3) fails for some $\alpha$, it clearly fails for all smaller $\alpha$.} $0 < \alpha < 2$ so that $b > 0$. Then by Lemma 3.3, $g(x) \leq 0$  and thus $f(x) \leq 0$ for some $x>0$. Hence, for this $x$, $\mathcal{B}(x,0) = f(x) \leq 0 < V(x,0) = e^x - 1$. Therefore, for this range of $\alpha$, the Bellman function $\mathcal{B}$ cannot posses both the majorization and concavity properties of Theorem 3.1, and (3.3) is false.

\end{proof}

The next theorem allows us to drop the smoothness assumption on $\mathcal{B}$. However, we will use a mollification argument which requires $\mathcal{B}$ to be continuous. We will subsequently show that the continuity of $\mathcal{B}$ is a consequence of the concavity property.

\begin{theorem}
If $\mathcal{B}(x,0)$ is continuous, then $\alpha = 2$ is the minimal constant such that (3.3) holds.
\end{theorem}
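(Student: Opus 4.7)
The plan is to reduce to the $\mathcal{C}^2$ case treated in Theorem 3.4 by a mollification argument. Let $\varphi_\epsilon$ be a standard nonnegative smooth mollifier on $\mathbb{R}$ with $\int \varphi_\epsilon = 1$ and support in $[-\epsilon,\epsilon]$. As in the proof of Theorem 3.4, set $f(x) := \mathcal{B}(x,0)$, so that the homogeneity $\mathcal{B}(x,y) = e^{\alpha y} f(x - \alpha y)$ together with the concavity property (specialized to $y=0$) yields the scalar inequality
$$\frac{f(x - \delta - \alpha\delta^2) + f(x + \delta - \alpha\delta^2)}{2} \leq e^{-\alpha \delta^2} f(x)$$
for all $x, \delta \in \mathbb{R}$. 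Define $f_\epsilon := f \ast \varphi_\epsilon$. Since $f$ is continuous by hypothesis, $f_\epsilon \in \mathcal{C}^\infty(\mathbb{R})$ and $f_\epsilon \to f$ uniformly on compact sets as $\epsilon \to 0$.

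The first key step is that the displayed inequality is preserved by convolution with $\varphi_\epsilon$: applying it at the point $x - z$, multiplying by $\varphi_\epsilon(z) \geq 0$, and integrating in $z$ gives the same inequality with $f$ replaced by $f_\epsilon$. Now $f_\epsilon$ is genuinely $\mathcal{C}^2$, so the Taylor-expansion argument from the proof of Theorem 3.4 applies verbatim: we obtain $f_\epsilon''(x) - 2\alpha f_\epsilon'(x) + 2\alpha f_\epsilon(x) \leq 0$, and the substitution $g_\epsilon(x) := e^{-\alpha x} f_\epsilon(x)$ converts this to $g_\epsilon'' + b\, g_\epsilon \leq 0$ with $b := 2\alpha - \alpha^2$. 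Assume $0 < \alpha < 2$, so that $b > 0$ and Lemma 3.4 is available.

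To invoke Lemma 3.4, fix any $x_0 > 0$. Majorization gives $f(x_0) \geq e^{x_0} - 1 > 0$, and continuity of $f$ ensures $f \geq c > 0$ on some neighborhood of $x_0$. Hence $f_\epsilon(x_0) > 0$, and therefore $g_\epsilon(x_0) > 0$, for all sufficiently small $\epsilon$. Lemma 3.4 then furnishes a point $x_\epsilon \in [x_0, x_0 + \pi/\sqrt{b}]$ with $g_\epsilon(x_\epsilon) = 0$, equivalently $f_\epsilon(x_\epsilon) = 0$. By compactness, pass to a subsequence $\epsilon_k \to 0$ along which $x_{\epsilon_k} \to x^\ast \in [x_0, x_0 + \pi/\sqrt{b}]$; uniform convergence $f_{\epsilon_k} \to f$ on this compact interval gives $f(x^\ast) = 0$. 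Since $x^\ast \geq x_0 > 0$, majorization demands $f(x^\ast) \geq e^{x^\ast} - 1 > 0$, a contradiction. Hence (3.3) fails for $\alpha < 2$, and combined with Theorem 3.2 this shows $\alpha = 2$ is the minimal valid constant.

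The main obstacle, and the only nontrivial point beyond the earlier proof, is verifying that the parabolic concavity inequality is stable under mollification. The rest is routine: the ODE reduction is identical to that in Theorem 3.4, and the compactness/limit argument rests only on continuity of $f$ and the uniform bound on $x_\epsilon$ supplied by Lemma 3.4. A separate argument (deferred to the sequel of the paper) is required to establish that the concavity property actually forces continuity of $\mathcal{B}(x,0)$, removing the hypothesis of this theorem.
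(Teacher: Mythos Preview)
Your proof is correct and follows essentially the same approach as the paper: mollify the scalar inequality (3.14) to obtain a $\mathcal{C}^\infty$ function $f_\epsilon$, rerun the Taylor/ODE reduction from Theorem~3.5 to get $g_\epsilon'' + b g_\epsilon \le 0$, and then invoke Lemma~3.4 on the fixed compact interval $[x_0, x_0 + \pi/\sqrt{b}]$ to contradict majorization. The only cosmetic difference is the endgame: the paper fixes a tolerance $\epsilon'$, chooses a single $\epsilon$ with $|f - f_\epsilon| < \epsilon'$ on that interval, and reads off $f(x) < \epsilon' < e^{x_0} - 1 \le V(x,0)$ directly, whereas you extract a convergent subsequence $x_{\epsilon_k} \to x^\ast$ and pass to the limit to get $f(x^\ast) = 0$; both arguments rest on the same uniform convergence on the same compact interval and are interchangeable.
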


\begin{proof}
Let $\eta(x)$ be the standard mollifier.
$$
\eta(x) \coloneqq \begin{cases}
Ce^{\frac{1}{x^2-1}} & \text{if } |x| < 1\\
0 & \text{if } |x| \geq 1
\end{cases}
$$
with $C>0$ chosen so that $\int_\mathbb{R} \eta(x) dx = 1$.
Let $\eta_\epsilon(x) \coloneqq \frac{1}{\epsilon}\eta(\frac{x}{\epsilon})$.
Convolving both sides of (3.14) with $\eta_\epsilon$ gives
$$\frac{f_\epsilon(x - \delta -\alpha \delta^2) + f_\epsilon(x + \delta -\alpha \delta^2)}{2} \leq e^{-\alpha\delta^2}f_\epsilon(x)$$
where $f_\epsilon = f*\eta_\epsilon$.

$f_\epsilon \in C^\infty(\mathbb{R})$, so by the proof of the previous theorem, we have $g_\epsilon'' + bg_\epsilon \leq 0$ where $g_\epsilon(x) \coloneqq e^{-\alpha x}f_\epsilon(x)$ and $b \coloneqq 2\alpha - \alpha^2$. As before, we suppose $0 < \alpha < 2$ so that $b>0$.

Fix $\epsilon' > 0$. Select $x_0$ such that $\epsilon' < e^{x_0} - 1$. Since $f$ is continuous by assumption, $f_\epsilon \to f$ uniformly on compact sets \cite{Evans}, and we can select $\epsilon$ such that $\left \vert f - f_\epsilon \right \vert < \epsilon'$ on $[x_0,x_0 + \frac{\pi}{\sqrt{b}}]$. By Lemma 3.4, there exists $x \in [x_0, x_0 + \frac{\pi}{\sqrt{b}}]$ such that $g_\epsilon(x) \leq 0$ which implies $f_\epsilon(x) \leq 0$. Therefore, for this $x$, we have
$$\mathcal{B}(x,0) = f(x) < f_\epsilon(x) + \epsilon' \leq \epsilon' < e^{x_0}-1<e^x-1 = V(x,0)$$

Once again, we've shown $\mathcal{B}$ cannot posses both the majorization and concavity properties of Theorem 3.1, and so (3.3) is false for $\alpha < 2$ assuming $\mathcal{B}$ is continuous.
\end{proof}

Our final task is to justify the assumption that $\mathcal{B}$ is continuous. Indeed, this follows from the concavity property.

\begin{lemma}
Given $\alpha \in \mathbb{R}$, if $f:\mathbb{R} \to \mathbb{R}$ is an increasing function such that
$$\frac{f(x_0 - t -\alpha t^2) + f(x_0 + t -\alpha t^2)}{2} \leq e^{-\alpha t^2}f(x_0)$$
for all $t$, then $f$ is continuous at $x_0$.
\end{lemma}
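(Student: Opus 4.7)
The plan is to combine the monotonicity of $f$ with the concavity hypothesis to show that the one-sided limits at $x_0$ coincide with $f(x_0)$. Since $f$ is increasing, the limits $f(x_0^-) := \lim_{y \uparrow x_0} f(y)$ and $f(x_0^+) := \lim_{y \downarrow x_0} f(y)$ exist and satisfy $f(x_0^-) \leq f(x_0) \leq f(x_0^+)$, so continuity at $x_0$ amounts to showing both inequalities are equalities.

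Assuming $\alpha > 0$ (the case of interest by the footnote in the proof of Theorem 3.5), I would first let $t \downarrow 0$ in the hypothesis. For $t \in (0, 1/\alpha)$ we have $t - \alpha t^2 > 0$, so $x_0 + t - \alpha t^2$ approaches $x_0$ from above while $x_0 - t - \alpha t^2$ approaches from below. Using monotonicity to identify the limits of $f$ along each sequence and noting that $e^{-\alpha t^2} f(x_0) \to f(x_0)$, the inequality passes to the limit to give
\[
\frac{f(x_0^-) + f(x_0^+)}{2} \leq f(x_0).
\]
Combined with $f(x_0^-) \leq f(x_0) \leq f(x_0^+)$, this rearranges to $f(x_0^+) - f(x_0) \leq f(x_0) - f(x_0^-)$: the right jump is dominated by the left jump, so it suffices to rule out a strictly positive left jump.

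To eliminate the left jump, my plan is to evaluate the hypothesis at $t = 1/\alpha$, where $x_0 + t - \alpha t^2 = x_0$ and the inequality collapses to the clean constraint $f(x_0 - 2/\alpha) \leq (2e^{-1/\alpha} - 1) f(x_0)$, and then to compare with the limit as $t \downarrow 1/\alpha$. In this second limit, $x_0 + t - \alpha t^2$ passes back through $x_0$ from above to below, so the relevant value of $f$ drops from $f(x_0)$ to $f(x_0^-)$ while $x_0 - t - \alpha t^2$ approaches $x_0 - 2/\alpha$ from below. The discrepancy between the two limiting inequalities is controlled by the left jump, and balancing them against the monotonicity of $f$ at the auxiliary point $x_0 - 2/\alpha$ should force the jump to vanish.

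The main obstacle I expect is precisely this last step: the one-sided limit from $t \downarrow 1/\alpha$ is weaker than the inequality at $t = 1/\alpha$ itself by exactly the amount of the hypothetical left jump, so a clean contradiction requires delicate bookkeeping of one-sided limits at $x_0 - 2/\alpha$ and a careful use of the identity $f(x_0 - 2/\alpha) \leq (2e^{-1/\alpha} - 1) f(x_0)$ to close the gap.
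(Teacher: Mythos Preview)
Your first step is correct: letting $t\downarrow 0$ in the inequality at $x_0$ gives $\tfrac12\bigl(f(x_0^-)+f(x_0^+)\bigr)\le f(x_0)$, so any right jump is dominated by the left jump. The gap is in your third step, and it cannot be closed along the lines you sketch. Comparing the inequality at $t=1/\alpha$ with its limits as $t\to(1/\alpha)^{\pm}$ yields no new constraint: all three versions share the right-hand side $e^{-1/\alpha}f(x_0)$, and on the left the only change is that $f(x_0)$ and $f(x_0-2/\alpha)$ are replaced by their one-sided limits, which by monotonicity move in the direction that \emph{weakens} the inequality. Nothing here pins down the left jump. In fact the hypothesis at the single base point $x_0$ is genuinely too weak: for any $\alpha\ge 1/\ln 2$ the step function $f=\chi_{[0,\infty)}$ satisfies the stated inequality at $x_0=0$ for every $t$, yet is discontinuous there. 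So no argument that uses only the inequality centered at $x_0$ can succeed.

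The paper's proof avoids this by applying the concavity inequality not at $x_0$ but at a base point $x$ slightly to the \emph{left} of $x_0$ (in the intended application the inequality is available at every point, and the lemma is tacitly read that way). With $t$ small and $x$ chosen so that $x - t - \alpha t^2\in(x_0-\delta,x_0)$ while $x + t - \alpha t^2 > x_0$, the left-hand side is at least $\tfrac12\bigl((L-\epsilon)+U\bigr)$ and the right-hand side is controlled by $f(x)\le L$; letting $\epsilon\to 0$ gives $U\le L$ in one stroke. The essential idea you are missing is this shift of the base point from $x_0$ to a nearby point on its left.
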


\begin{proof}
Let $U = \lim\limits_{x \to x_0^+} f(x)$ and $L = \lim\limits_{x \to x_0^-} f(x)$. $U$ and $L$ exists with $L \leq U$ because $f$ is increasing. It suffices to show that $L \geq U$.

Fix $\epsilon > 0$. Choose $\delta > 0$ such that $f(x) > L - \epsilon$ when $x \in (x_0 - \delta, x_0)$. Select $t$ such that
$$0 < t < \min\left(\frac{1}{\alpha},\sqrt{\frac{\delta}{\alpha} + \frac{1}{4}}-\frac{1}{2}\right)$$
Note that this selection of $t$ ensures that $0 < t - \alpha t^2$ and $\alpha t^2 + t < \delta$. Finally, choose $x$ such that
$$\min (x_0 + \alpha t^2 - t, x_0 + \alpha t^2 + t - \delta) < x < x_0$$
Such an $x$ is known to exists since $\alpha t^2 - t < 0$ and $\alpha t^2 + t < \delta$.
Then we have
$$x_0 - \delta < x - \alpha t^2 - t < x < x_0 < x - \alpha t^2 + t$$
and hence
$$\frac{L - \epsilon + U}{2} \leq \frac{f(x - \alpha t^2 - t) + f(x - \alpha t^2 + t)}{2} \leq f(x)e^{-\alpha t} \leq f(x) \leq L$$
Since $\epsilon$ was arbitary, we have $\frac{L + U}{2} \leq L$ and thus $U \leq L$ as desired.
\end{proof}

We are now prepared to state our main result.

\begin{theorem}
$\alpha = 2$ is the smallest constant such that
$$\mathbb{E}e^{f_n} \leq \mathbb{E}e^{\alpha(Sf_n)^2}$$
for all dyadic martingales with $f_0 = 0$.
\end{theorem}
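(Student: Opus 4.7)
The plan is to combine the pieces already in place: Theorem 3.2 supplies the upper bound $\alpha = 2$, while the negative direction for $\alpha < 2$ follows by gluing Theorem 3.5 to Lemma 3.6 via a monotonicity observation about the Bellman function along the $x$-axis. The structure is a proof by contradiction: assume $\alpha<2$ and that (3.3) holds, produce the Bellman function $\mathcal{B}$ via the proof of Theorem 3.1, verify the hypotheses of Lemma 3.6 to conclude $\mathcal{B}(\cdot,0)$ is continuous, and then invoke Theorem 3.5 to derive a contradiction.

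Step 1 (upper bound). Apply Theorem 3.2 directly to conclude that (3.3) holds for $\alpha = 2$.

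Step 2 (reduction to continuity). Suppose for contradiction that (3.3) holds for some $0 < \alpha < 2$. By the proof of Theorem 3.1, the Bellman function $\mathcal{B}(x,y) := \sup\{\mathbb{E}V(f_n,(Sf)_n^2) : (f_n,(Sf)_n^2) \in \mathcal{M}(x,y)\}$ is finite on $\mathbb{R}\times[0,\infty)$ and satisfies the majorization, concavity, and initial condition of Theorem 3.1. Recall from the proof of Theorem 3.1 the homogeneity $\mathcal{B}(x,y) = e^{\alpha y}\mathcal{B}(x - \alpha y, 0)$, so it suffices to set $f(x) := \mathcal{B}(x,0)$ and work on the $x$-axis.

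Step 3 (monotonicity of $f$). I would verify that $f$ is non-decreasing by the following observation: for any $(f_n,(Sf)_n^2) \in \mathcal{M}(x,0)$ and any $c > 0$, the shifted process $(f_n + c, (Sf)_n^2)$ lies in $\mathcal{M}(x+c,0)$, and since $V(\xi + c, \eta) = e^c e^\xi - e^{\alpha\eta} \geq e^\xi - e^{\alpha\eta} = V(\xi,\eta)$, taking expectations and suprema gives $f(x+c) \geq f(x)$. Substituting $y = 0$ into the concavity condition and using the homogeneity relation above yields exactly the inequality
\[
\frac{f(x_0 - t - \alpha t^2) + f(x_0 + t - \alpha t^2)}{2} \leq e^{-\alpha t^2} f(x_0)
\]
required by Lemma 3.6, so that lemma applies and $f$ is continuous on $\mathbb{R}$.

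Step 4 (contradiction). With $\mathcal{B}(\cdot,0) = f$ now known to be continuous, Theorem 3.5 applies and shows that the three properties of Theorem 3.1 cannot coexist for $\mathcal{B}$ when $\alpha < 2$, contradicting Step 2. Hence (3.3) fails for every $\alpha < 2$, and combined with Step 1 this proves $\alpha = 2$ is the smallest admissible constant. I expect the only nontrivial bookkeeping to be the verification of monotonicity and the derivation of the inequality in the form required by Lemma 3.6; both are short, so the proof amounts mostly to cleanly citing the previous results in sequence.
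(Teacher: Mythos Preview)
Your proposal is correct and follows essentially the same route as the paper: establish monotonicity of $\mathcal{B}(\cdot,0)$ from the definition, feed monotonicity plus the concavity inequality into the continuity lemma, and then invoke the ``continuous case'' sharpness theorem to contradict the assumption that (3.3) holds for some $\alpha<2$. Your Step~3 even spells out the monotonicity argument that the paper leaves as ``clear from the definition.'' The only issue is a numbering slip: what you call Theorem~3.5 and Lemma~3.6 are the paper's Theorem~3.6 (the continuous hypothesis) and Lemma~3.7; the paper's Theorem~3.5 assumes $\mathcal{B}(\cdot,0)\in\mathcal{C}^2$, which continuity alone does not give, so make sure you cite the right statement.
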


\begin{proof}
It is clear from the definition that $\mathcal{B}(x,0)$ is increasing. This fact coupled with the concavity property implies $\mathcal{B}(x,0)$ is continuous by the previous lemma. Now apply Theorem 3.6.
\end{proof}

\section{Examples and Future Work}
It is worth noting that a simpler proof of Theorem 3.2 exists. It uses a result known as Rubin's lemma \cite{Pipher}, which says that if $\{f_n\}$ is a real valued dyadic martingale on $[0,1)$ whose limit is $f$, then for all $\lambda \geq 0$,
$$\int_0^1 e^{\lambda\left( f(x) -\langle f \rangle_{[0,1)}\right) - \frac{\lambda^2}{2}\left(Sf\right)^2(x)}dx \leq 1$$
See \cite{BM} for a proof.

We can use the Cauchy-Schwarz inequality with Rublin's lemma to prove Theorem 3.2. As usual, we may work on $([0,1),\mathcal{B},|\cdot|)$.
\begin{align*}
\int_0^1 e^{f(x)-\langle f \rangle_{[0,1)}}dx &= \int_0^1 e^{f(x)-\langle f \rangle_{[0,1)}-(Sf)^2(x)+(Sf)^2(x)}dx &\\
&\leq \sqrt{\int_0^1 e^{2[f(x)-\langle f \rangle_{[0,1)}]-2(Sf)^2(x)}dx} \sqrt{\int_0^1 e^{2(Sf)^2(x)}dx} &\\
&\leq \sqrt{1} \sqrt{\int_0^1 e^{2(Sf)^2(x)}dx} &\\
&\leq \int_0^1 e^{2 (Sf)^2(x)}dx &
\end{align*}

The Bellman function method is still desirable in at least two ways. First, it's a general technique for proving inequalities, while this simpler proof is very particular to the details of our problem. Second, the Bellman function allowed us to prove the sharpness of Theorem 3.2. The shorter proof would requiring an accompanying construction of a martingale which maximizes the left side of the inequality relative to the right side.

Such a construction could provide the basis for some future work. Sometimes, extremal examples can be deduced from the Bellman function itself (see, e.g. \cite{Wang}). Although we were unable to do this, we have an example of a martingale which shows that
\begin{equation}
\int_0^1 e^{f(x)-\langle f \rangle_{[0,1)}}dx \leq \int_0^1 e^{\alpha (Sf)^2(x)}dx
\end{equation}
is false for $\alpha < \log_2(e) \approx 1.44$. Recall that $\alpha = 2$ was optimal.

Let $f=\sum\limits_{n=0}^\infty \chi_{I_n^0} = \sum\limits_{n=1}^\infty n\chi_{I_n^1}$ where $I_n^k = \left[\frac{k}{2^n},\frac{k+1}{2^n}\right)$. As always, we can think of $f$ as a dyadic martingale by letting $f_n = \mathbb{E}[f\vert \mathcal{D}_n]$ where $\mathcal{D}_n$ is the $\sigma$-algebra generated by the nth generation of dyadic subintervals of $[0,1)$. This function is a discrete approximation of $-\log_2x$ has the property that $(Sf)^2 = f$. It is a "fixed point" of the operator that sends $f \mapsto (Sf)^2$. The left side of (4.1) becomes
$$\int_0^1 e^{f(x)-\langle f \rangle_{[0,1)}}dx \approx \int_0^1 e^{-\log_2x}dx = \int_0^1 x^{-\log_2(e)}dx = \infty$$
On the other hand, the right side becomes
$$\int_0^1 e^{\alpha (Sf)^2(x)}dx = \int_0^1 e^{\alpha f(x)}dx \approx \int_0^1 e^{-\alpha\log_2x}dx = \int_0^1 x^{-\alpha\log_2(e)}dx$$
which is finite for $\alpha < \frac{1}{\log_2(e)} = \ln(2) \approx .69$. So our example falsifies (4.1) for this range of $\alpha$.

We can push this example further. $[S(\lambda g)]^2 = \lambda^2 [S(g)]^2$ for all $g$ follows immediately from the definition of $S$. Applying this to our present example gives $[S(\lambda f)]^2 = \lambda^2 [S(f)]^2 = \lambda^2 f$. Plugging $\lambda f$ into (4.1), the left side is approximately $\int_0^1 x^{-\lambda\log_2(e)}dx$ and the right side is approximately $\int_0^1 x^{-\alpha\lambda^2\log_2(e)}dx$. Thus for $\lambda = \frac{1}{\log_2(e)}$, the left side is infinite and the right side is finite when $\alpha < \log_2(e) \approx 1.44$, so $\frac{f}{\log_2(e)}$ falsifies (4.1) for this range of $\alpha$.

We are still left without a martingale showing $\alpha = 2$ is sharp. While the Bellman function proof makes it unnecessary, finding an explicit example is an interesting future problem.
\bibliographystyle{ieeetr}
\bibliography{bibliography}

\begin{thebibliography}{10}

\bibitem{Burkholder}
D.~L. Burkholder, E.~Pardoux, and A.~Sznitman, {\em \'Ecole d'\'Et\'e de
  {P}robabilit\'es de {S}aint-{F}lour {XIX}---1989}, vol.~1464 of {\em Lecture
  Notes in Mathematics}.
\newblock Springer-Verlag, Berlin, 1991.
\newblock Papers from the school held in Saint-Flour, August 16--September 2,
  1989, Edited by P. L. Hennequin.

\bibitem{SV}
L.~Slavin and A.~Volberg, ``The {$s$}-function and the exponential integral,''
  in {\em Topics in harmonic analysis and ergodic theory}, vol.~444 of {\em
  Contemp. Math.}, pp.~215--228, Amer. Math. Soc., Providence, RI, 2007.

\bibitem{Osekowski}
A.~Osȩkowski, ``Survey article: {B}ellman function method and sharp
  inequalities for martingales,'' {\em Rocky Mountain J. Math.}, vol.~43,
  no.~6, pp.~1759--1823, 2013.

\bibitem{Llorente}
J.~G. Llorente, {\em Discrete martingales and applications to analysis},
  vol.~87 of {\em Report. University of Jyv\"askyl\"a Department of Mathematics
  and Statistics}.
\newblock University of Jyv\"askyl\"a, Jyv\"askyl\"a, 2002.

\bibitem{Stein}
E.~M. Stein, ``The development of square functions in the work of {A}.
  {Z}ygmund,'' in {\em Conference on harmonic analysis in honor of {A}ntoni
  {Z}ygmund, {V}ol. {I}, {II} ({C}hicago, {I}ll., 1981)}, Wadsworth Math. Ser.,
  pp.~2--30, Wadsworth, Belmont, CA, 1983.

\bibitem{CWW}
S.-Y.~A. Chang, J.~M. Wilson, and T.~H. Wolff, ``Some weighted norm
  inequalities concerning the {S}chr\"odinger operators,'' {\em Comment. Math.
  Helv.}, vol.~60, no.~2, pp.~217--246, 1985.

\bibitem{Pipher}
J.~Pipher, ``A martingale inequality related to exponential square
  integrability,'' {\em Proc. Amer. Math. Soc.}, vol.~118, no.~2, pp.~541--546,
  1993.

\bibitem{Evans}
L.~C. Evans, {\em Partial differential equations}, vol.~19 of {\em Graduate
  Studies in Mathematics}.
\newblock American Mathematical Society, Providence, RI, second~ed., 2010.

\bibitem{BM}
R.~Ba\~nuelos and C.~N. Moore, {\em Probabilistic behavior of harmonic
  functions}, vol.~175 of {\em Progress in Mathematics}.
\newblock Birkh\"auser Verlag, Basel, 1999.

\bibitem{Wang}
G.~Wang, ``Sharp inequalities for the conditional square function of a
  martingale,'' {\em Ann. Probab.}, vol.~19, no.~4, pp.~1679--1688, 1991.

\end{thebibliography}

\end{document}